\documentclass[11pt]{amsart}

\usepackage{amssymb}
\usepackage{amsfonts}
\usepackage{amsmath}
\usepackage{amsthm}
\usepackage{graphicx}
\usepackage{amsfonts}

\def \IR{\mathbb R}

\newtheorem{ori}{orismos}[section]
\newtheorem{theor}[ori]{Theorem}
\newtheorem{theor1}{Theorem}

\newtheorem{limma}[ori]{Lemma}

\newtheorem{remark}[ori]{Remark}

\begin{document}

\title[INVARIANCE OF GREEN EQUILIBRIUM MEASURE]{INVARIANCE OF GREEN EQUILIBRIUM MEASURE ON THE DOMAIN}

\author{STAMATIS POULIASIS}
\address{Department of  Mathematics\\
Aristotle University of Thessaloniki\\
54124 Thessaloniki, Greece} \email{{\tt spoulias@math.auth.gr}}
\subjclass{Primary: 31B15; Secondary: 31A15, 30C85.}
\date{28 June 2010}
\keywords{Green capacity, equilibrium measure,
level sets.}

\maketitle

\begin{abstract}
We prove that the Green equilibrium measure and the
Green equilibrium energy of a compact set $K$
relative to the domains $D$ and $\Omega$ are the same
if and only if $D$ is nearly equal to $\Omega$, for
a wide class of compact sets $K$. Also, we prove that
equality of Green equilibrium measures arises if
and only if the one domain is related with a level
set of the Green equilibrium potential of $K$
relative to the other domain.
\end{abstract}

\section{Introduction}

Consider the following inverse problem in
classical potential theory: Suppose $K$ is
a compact set in a domain $D$. Do the
equilibrium measure and the equilibrium energy
of $K$ relative to $D$ characterize the domain
$D$? More generally, if we know the energy
and the measure on $K$, what can we conclude
about the ambient domain $D$? We proceed to
a rigorous formulation of the problem.

Let $D$ be a Greenian open subset of $\IR^{n}$ and
denote by $G_{D}(x,y)$ the Green function of $D$. Also
let $K$ be a compact subset of $D$. The \textit{Green equilibrium
energy} of $K$ relative to $D$ is defined by
$$I(K,D)=\inf_{\mu}\int\!\!\!\!\int G_{D}(x,y)d\mu(x)d\mu(y),$$
where the infimum is taken over all unit Borel measures
$\mu$ supported on $K$. The \textit{Green capacity} of $K$
relative to $D$ is the number
$$C_{D}(K)=\frac{1}{I(K,D)}.$$
When $I(K,D)<+\infty$, the unique unit Borel measure
$\mu_{K}$ for which the above infimum is attained
is the \textit{Green equilibrium measure} and
the function
$$U_{\mu_{K}}^{D}(x)=\int G_{D}(x,y)d\mu_{K}(y)$$
is the \textit{Green equilibrium potential} of $K$ relative
to $D$.
See e.g. \cite[p. 174]{3} or \cite[p. 134]{1}.  We denote by
$C_{2}(E)$ the logarithmic ($n=2$) or Newtonian ($n\geq3$)
capacity of the Borel set $E$. When $E\subset D$, the equalities $C_{D}(E)=0$
and $C_{2}(E)=0$ are equivalent; \cite[p. 174]{3}.
If two Borel sets $A,B\subset\IR^{n}$ differ only on a set of
zero capacity (namely,
$C_{2}(A\setminus B)=C_{2}(B\setminus A)=0$), then we
say that $A,B$ are \textit{nearly everywhere equal} and
write $A{\stackrel{n.e.}{=}}B$. Nearly everywhere
equal sets have the same potential theoretic
behavior.

\medskip

Suppose that $D$ is a domain and $E$ is a compact subset of $D\setminus K$.
If $C_{2}(E)=0$, then the Green capacity
and the Green equilibrium measure of $K$ relative to the
open sets $D$ and $D\setminus E$ are the same. That
happens because the sets $D$ and $D\setminus E$
are nearly everywhere equal.
The following question arises naturally:

\begin{quote}
Does there exist a Greenian domain $\Omega$,
not nearly everywhere equal to $D$, such that
the Green capacity
and the Green equilibrium measure of $K$ relative to
$D$ and $\Omega$ are the same?
\end{quote}

In general the answer is positive. However, we
shall show that for
a large class of compact sets (for example for
compact sets with nonempty
interior) the answer is negative. That is,
given a compact set $K$ with nonempty interior,
the Green equilibrium measure $\mu_{K}$ and the
positive number $C_{D}(K)$ completely characterizes
$D$ from the point of view of potential theory.

There is a second question:

\begin{quote}
Suppose that the Green equilibrium measures
of $K$ relative to $D$ and $\Omega$ are the same.
How are the sets $D$ and $\Omega$ related?
\end{quote}
In that case, the boundary of the domain relative
to which $K$ has smaller Green energy is
a level set of the Green equilibrium potential
of $K$ relative to the other domain.

\medskip

In our main result we give an answer
to the above questions, for compact sets with
nonempty interior.

\begin{theor1}\label{idiometrogreen}
Let $K$ be a compact subset of $\IR^{n}$ with nonempty interior. Let $D_{1},D_{2}$
be two Greenian subdomains of $\IR^{n}$ that contain $K$ and
let $\mu_{1}$ and $\mu_{2}$ be the Green equilibrium
measures of $K$ relative to $D_{1}$ and $D_{2}$,
respectively. Then\smallskip

$\mbox{ }({\rm{i}})$ $\mu_{1}=\mu_{2}$ and $I(K,D_{1})=I(K,D_{2})$
if and only if $D_{1}{\stackrel{n.e.}{=}}D_{2}$.

\smallskip

$({\rm{ii}})$ If $I(K,D_{1})<I(K,D_{2})$ and the set
$$\tilde{D_{2}}=\{x\in D_{2}:U_{\mu_{2}}^{D_{2}}(x)>I(K,D_{2})-I(K,D_{1})\}$$
contains $K$, we have
$$ \mu_{1}=\mu_{2} \mbox{ if and only if } D_{1}{\stackrel{n.e.}{=}}\tilde{D_{2}}.$$
\end{theor1}

Moreover, we shall show that Theorem \ref{idiometrogreen} is
valid for a much wider class of compact sets (see Remark \ref{extension}).

\medskip

In the following section we introduce the
concepts of Green potential theory that are needed for
our results. Theorem \ref{idiometrogreen} is proved
in section 3. In section 4 we examine the case of compact
sets $K$ with empty interior, we give some
counterexamples and we pose a conjecture and a question.

\section{Background Material}

We denote by $B(x,r)$ and $S(x,r)$ the open ball and the
sphere with center
$x$ and radius $r$ in $\IR^{n}$, respectively. For a set $E\subset\IR^{n}$,
the interior, the closure and the boundary of $E$ are
denoted by $E^{\circ}$, $\overline{E}$ and $\partial E$,
respectively. The surface area of the unit sphere
$S(0,1)$ of $\IR^{n}$ is denoted by $\sigma_{n}$.

If a property holds for all the points of a set
$A$ apart from a set of zero capacity we will say that
the property holds for \textit{nearly every} ($n.e.$) point of $A$.

The logarithmic ($n=2$) or Newtonian ($n\geq3$) kernel
is denoted by
\begin{equation}
\mathcal{K}(x,y) := \left\{ \begin{array}{cc}
                       \log\frac{1}{|x-y|}, & n=2, \\
                       \frac{1}{|x-y|^{n-2}}, & n\geq 3, \nonumber
                       \end{array}
               \right.
\end{equation}
for $(x,y)\in\IR^{n}\times\IR^{n}$. If $D\subset\IR^{n}$
is a Greenian open set then
$$G_{D}(x,y)=\mathcal{K}(x,y)+H(x,y)$$
where, for fixed $y$, $H(\cdot,y)$ is the greatest
harmonic minorant of $\mathcal{K}(\cdot,y)$ on $D$. If $\mu$
is a measure with compact support on $D$, the Green
potential of $\mu$ is the function
$$U_{\mu}^{D}(x)=\int G_{D}(x,y)d\mu(y),\qquad x\in D.$$
Then (see \cite[pp. 96-104]{1}) $U_{\mu}^{D}$ is superharmonic on $D$, harmonic on
$D\setminus {\rm{supp}}(\mu)$ and the Riesz measure of $U_{\mu}^{D}$
is proportional to $\mu$; more precisely, $\Delta U_{\mu}^{D}=-\kappa_{n}\mu$,
where $\Delta$ is the distributional Laplacian on $D$ and
\begin{equation}
\kappa_{n} = \left\{ \begin{array}{cc}
                       \sigma_{2}, & n=2, \\
                       (n-2)\sigma_{n}, & n\geq 3. \nonumber
                       \end{array}
               \right.
\end{equation}
We shall need the following result for the boundary
behavior of a Green potential.
\begin{theor}\label{boundarybehavior}\cite[p. 148]{1}
Let $\Omega$ be a Greenian open subset of $\IR^{n}$
and let $\mu$ be a Borel measure with compact
support in $\Omega$. Then
$$\lim_{\Omega\ni x\to\xi}U_{\mu}^{\Omega}(x)=0,$$
for nearly every point $\xi\in\partial\Omega$.
\end{theor}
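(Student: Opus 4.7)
The plan is to deduce the theorem from two standard ingredients of Greenian potential theory: Kellogg's theorem on the set of irregular boundary points, and the characterization of regularity via the Green function.

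First, I would recall that a boundary point $\xi\in\partial\Omega$ is regular for the Dirichlet problem on $\Omega$ if and only if $\lim_{\Omega\ni x\to\xi}G_{\Omega}(x,y)=0$ for one (equivalently every) $y\in\Omega$. Kellogg's theorem then asserts that the set $\mathcal{I}$ of irregular points of $\partial\Omega$ has outer capacity zero, so nearly every boundary point is regular. It therefore suffices to prove $\lim_{\Omega\ni x\to\xi}U_{\mu}^{\Omega}(x)=0$ at each regular $\xi\in\partial\Omega$.

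Fix such a regular $\xi$. Let $K=\mathrm{supp}(\mu)$, a compact subset of $\Omega$, and choose $r>0$ with $\overline{B(\xi,r)}\cap K=\emptyset$. Set $V=B(\xi,r)\cap\Omega$ and $\delta=d(K,B(\xi,r))>0$. By regularity of $\xi$, for each fixed $y\in K$ we have $G_{\Omega}(x,y)\to 0$ as $V\ni x\to\xi$. To pass this pointwise limit through the integral defining $U_{\mu}^{\Omega}$, I would apply Lebesgue's dominated convergence theorem.

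The main obstacle is producing a $\mu$-integrable dominating function, that is, a constant $M$ with $G_{\Omega}(x,y)\leq M$ for $(x,y)\in V\times K$. From the decomposition $G_{\Omega}(x,y)=\mathcal{K}(x,y)+H(x,y)$ with the harmonic corrector $H(\cdot,y)$ nonpositive on bounded pieces of $\Omega$ (at least in dimension $n\geq 3$), one obtains $G_{\Omega}(x,y)\leq\mathcal{K}(x,y)$, and since $|x-y|\geq\delta>0$ on $V\times K$ this yields the required uniform bound. In the $n=2$ case on large or unbounded $\Omega$, where $H$ may be positive, I would reduce to a bounded setting by exhausting $\Omega$ by relatively compact Greenian subdomains $\Omega_{k}\uparrow\Omega$; since $G_{\Omega_{k}}\uparrow G_{\Omega}$, a monotone-convergence argument transfers the boundary vanishing to the limit. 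Once the bound is secured, dominated convergence yields
\begin{equation*}
\lim_{V\ni x\to\xi}U_{\mu}^{\Omega}(x)=\int_{K}\lim_{V\ni x\to\xi}G_{\Omega}(x,y)\,d\mu(y)=0,
\end{equation*}
and combining with Kellogg's theorem gives the conclusion at nearly every $\xi\in\partial\Omega$.
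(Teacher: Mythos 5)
The paper offers no proof of this statement: it is quoted directly from Armitage and Gardiner \cite[p. 148]{1}, so there is no internal argument to compare yours with. Judged on its own terms, your outline follows the standard textbook route --- reduce to regular boundary points via Kellogg's theorem, use the fact that $G_{\Omega}(\cdot,y)$ vanishes at regular points, and pass the limit through the integral --- and it is complete in dimension $n\geq 3$, where $H(\cdot,y)\leq 0$ everywhere (not just on bounded pieces) and hence $G_{\Omega}(x,y)\leq\mathcal{K}(x,y)\leq\delta^{-(n-2)}$ on $V\times K$ supplies the dominating constant.

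The gap is the planar case. First, $G_{\Omega}\leq\mathcal{K}$ genuinely fails for $n=2$: the kernel $\log(1/|x-y|)$ is negative for $|x-y|>1$ while $G_{\Omega}>0$, so a substitute bound is really needed, and for unbounded $\Omega\subset\IR^{2}$ the usual trick (the constant $\log(1/\mathrm{diam}\,\Omega)$ is a harmonic minorant of $\mathcal{K}(\cdot,y)$, forcing $G_{\Omega}(x,y)\leq\log(\mathrm{diam}\,\Omega/|x-y|)$) is unavailable. Second, and more seriously, your proposed repair by exhaustion is logically backwards: from $G_{\Omega_{k}}\uparrow G_{\Omega}$ and the vanishing of each $G_{\Omega_{k}}$ on $\partial\Omega_{k}$ you can transfer \emph{lower} bounds to the limit, but $\lim_{\Omega\ni x\to\xi}U_{\mu}^{\Omega}(x)=0$ for a nonnegative function is an \emph{upper} bound statement, so nothing about the boundary behaviour of the larger function $G_{\Omega}$ on $\partial\Omega$ follows. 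A clean fix valid in every dimension, which also removes the need for dominated convergence: fix a compact neighbourhood $W$ of $K=\mathrm{supp}(\mu)$ with $W\subset\Omega$; for $x\in\Omega\setminus W$ the function $y\mapsto G_{\Omega}(x,y)$ is nonnegative and harmonic on $W^{\circ}$, so Harnack's inequality (a finite chain of balls covering $K$, splitting $\mu$ over components of $W^{\circ}$ if necessary) gives $G_{\Omega}(x,y)\leq C\,G_{\Omega}(x,y_{0})$ for all $y\in K$ with $C$ independent of $x$; hence $U_{\mu}^{\Omega}(x)\leq C\mu(K)\,G_{\Omega}(x,y_{0})\to 0$ as $x\to\xi$ for every regular $\xi\in\partial\Omega$, and Kellogg's theorem finishes the argument.
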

Also, if $U_{\mu_{K}}^{D}$ is the Green equilibrium potential of a
compact set $K$ relative to $D$, then the equality
\begin{equation}\label{equilibpotentialbehavior}
U_{\mu_{K}}^{D}(x)=I(K,D)
\end{equation}
holds for all $x\in K^{\circ}$ and for nearly every point $x\in K$;
(\cite[p. 138]{2}).

The following theorem gives a geometric
interpretation of the fact that sets of zero capacity
are negligible.

\begin{theor}\label{connectedness}{{\rm{(}}\cite[p. 125]{1}{\rm{)}}.}
Let $\Omega\subset\IR^{n}$ be a domain and $E$
a relatively closed subset of $\Omega$ with
$C_{2}(E)=0$. Then the set $\Omega\setminus E$ is
connected.
\end{theor}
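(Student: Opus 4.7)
The plan is a proof by contradiction that reduces the statement to the removability of zero-capacity sets for bounded harmonic functions. Since $E$ is relatively closed in $\Omega$, the complement $\Omega\setminus E$ is open; I will assume it is disconnected and derive a contradiction with the connectedness of $\Omega$ itself.

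First I would write $\Omega\setminus E = U_{1}\sqcup U_{2}$ with $U_{1},U_{2}$ nonempty, disjoint, open sets, and define $f\colon\Omega\setminus E\to\IR$ by $f\equiv 0$ on $U_{1}$ and $f\equiv 1$ on $U_{2}$. This $f$ is bounded and locally constant, hence harmonic on $\Omega\setminus E$. The crucial step is then to invoke the classical removability theorem: a relatively closed subset $E$ of $\Omega$ with $C_{2}(E)=0$ is polar, and polar sets are removable singularity sets for bounded harmonic functions (this is stated in the cited reference \cite{1}, and covers both $n=2$ via logarithmic capacity and $n\geq 3$ via Newtonian capacity). Applying this to $f$ produces a bounded harmonic extension $\tilde f$ defined on all of $\Omega$ that agrees with $f$ on $\Omega\setminus E$.

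To conclude, I would use that $\Omega$ is connected and $\tilde f$ is real-analytic, so the identity principle for harmonic functions forces $\tilde f\equiv 0$ throughout $\Omega$ (because $\tilde f\equiv 0$ on the nonempty open set $U_{1}$). But $\tilde f\equiv 1$ on the nonempty open set $U_{2}$, a contradiction. Hence $\Omega\setminus E$ must be connected.

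The main obstacle is the removability ingredient; everything else is essentially bookkeeping. One should confirm that the removability statement in \cite{1} is available in exactly the form needed (relatively closed $E\subset\Omega$ with $C_{2}(E)=0$, extension of \emph{bounded} harmonic functions across $E$) uniformly for $n=2$ and $n\geq 3$, so that no additional local hypothesis on $f$ beyond its obvious boundedness by $1$ is required.
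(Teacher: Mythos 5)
The paper gives no proof of this statement at all --- it is imported verbatim from \cite[p.~125]{1} as background material --- so there is no in-paper argument to measure you against; judged on its own terms, your proof is correct and complete. Each link in the chain holds: $\Omega\setminus E$ is open because $E$ is relatively closed in $\Omega$; a disconnected open subset of $\IR^{n}$ splits into two nonempty disjoint open pieces (components of open sets are open); your locally constant $f$ is harmonic and bounded by $1$; a relatively closed Borel set with $C_{2}(E)=0$ is polar; the removable-singularity theorem for harmonic functions that are locally bounded near a relatively closed polar set is available in the cited monograph in exactly the form you need, uniformly for $n=2$ and $n\geq 3$; and the identity principle on the connected set $\Omega$ (applied to the real-analytic extension $\tilde f$, which vanishes on the nonempty open set $U_{1}$ yet equals $1$ on the nonempty open set $U_{2}$) delivers the contradiction. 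Two minor remarks: it is worth noting explicitly that polar sets have empty interior, so $\Omega\setminus E\neq\varnothing$ and the statement is not vacuous; and the textbook's own derivation of this corollary goes through superharmonic functions that are identically $+\infty$ on $E$ together with a minimum-principle argument, rather than through removability, so your route is a genuinely different (and arguably more transparent) reduction, at the price of invoking the extension theorem as a black box.
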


\begin{remark}\label{condensers}
{\rm{When the open set $D\setminus K$ is connected, it is called
a \textit{condenser} and the sets $\IR^{n}\setminus D$
and $K$ are called the \textit{plates} of the condenser; see \cite{5,6}.
It is well known that
Green potential theory and condenser theory are
equivalent; see e.g. \cite[p. 700-701]{7} or \cite[p. 393]{8}. Therefore,
Theorem \ref{idiometrogreen} can be restated
as a theorem about condensers.}}
\end{remark}

\section{Dependence of Green equilibrium measure on the domain}

Let $D$ be a Greenian open subset of $\IR^{n}$ and
$K$ a compact subset of $D$. First we shall examine
the open sets bounded by the level surfaces of the
Green equilibrium potential of $K$ relative to $D$.
The Green equilibrium measure is the same for each of these open sets.

\begin{limma}\label{leveldomain}
Let $D$ be a Greenian open subset of $\IR^{n}$ and let
$K$ be a compact subset of $D$ that has finite
Green equilibrium energy relative to $D$.
Also let $\mu_{K}$ be the Green equilibrium
measure of $K$ relative to $D$ and for $0<\alpha<I(K,D)$,
let
$$D_{\alpha}=\{x\in D:U_{\mu_{K}}^{D}(x)>\alpha\}.$$
If $K\subset D_{\alpha}$ and $\mu_{\alpha}$ is the
Green equilibrium measure of $K$ relative to $D_{\alpha}$,
then $\mu_{\alpha}=\mu_{K}$ and
$$I(K,D_{\alpha})=I(K,D)-\alpha.$$
\end{limma}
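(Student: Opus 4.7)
The plan is to reduce the lemma to the single key identity
\[
U_{\mu_{K}}^{D_{\alpha}}(x)\;=\;U_{\mu_{K}}^{D}(x)-\alpha,\qquad x\in D_{\alpha}.
\]
Granted this, \eqref{equilibpotentialbehavior} applied on $D$ shows that the right-hand side equals $I(K,D)-\alpha$ for every $x\in K^{\circ}$ and for nearly every $x\in K$. Thus $\mu_{K}$ is a unit Borel measure supported on $K$ whose potential relative to $D_{\alpha}$ is n.e. constant on $K$. The standard characterization of the Green equilibrium measure (the unique probability measure on $K$ with this constant-potential property) then forces $\mu_{\alpha}=\mu_{K}$, and integrating the identity against $\mu_{K}$ yields $I(K,D_{\alpha})=I(K,D)-\alpha$.

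To establish the identity I would set
\[
v(x):=U_{\mu_{K}}^{D}(x)-\alpha-U_{\mu_{K}}^{D_{\alpha}}(x),\qquad x\in D_{\alpha}.
\]
Since $\mu_{K}$ is supported in $K\subset D_{\alpha}$, both Green potentials have distributional Laplacian $-\kappa_{n}\mu_{K}$ on $D_{\alpha}$, so $v$ is harmonic on $D_{\alpha}$. Boundedness of $v$ follows from Frostman's maximum principle, which gives $0\leq U_{\mu_{K}}^{D}\leq I(K,D)$, combined with the analogous bound for $U_{\mu_{K}}^{D_{\alpha}}$ (whose finite energy comes from the monotonicity $G_{D_{\alpha}}\leq G_{D}$). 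It then suffices to show that $v$ has vanishing boundary values at nearly every point of $\partial D_{\alpha}$, after which the generalized maximum principle on the Greenian open set $D_{\alpha}$ forces $v\equiv 0$.

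The main obstacle is exactly this boundary analysis, and I would split $\partial D_{\alpha}$ into two parts. At nearly every $\xi\in\partial D$, Theorem \ref{boundarybehavior} applied to $U_{\mu_{K}}^{D}$ gives $U_{\mu_{K}}^{D}(x)\to 0$ as $D\ni x\to\xi$; such $\xi$ cannot be limit points of $D_{\alpha}=\{U_{\mu_{K}}^{D}>\alpha\}$, so $\partial D\cap\partial D_{\alpha}$ lies in a set of zero capacity and is negligible. For $\xi\in\partial D_{\alpha}\cap D$, lower semicontinuity of $U_{\mu_{K}}^{D}$ together with $U_{\mu_{K}}^{D}(x)>\alpha$ on $D_{\alpha}$ gives $\liminf_{D_{\alpha}\ni x\to\xi}U_{\mu_{K}}^{D}(x)\geq\alpha$, while $\xi\notin D_{\alpha}$ gives $U_{\mu_{K}}^{D}(\xi)\leq\alpha$. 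Since a superharmonic function is continuous off a set of zero capacity, for nearly every such $\xi$ the sandwich collapses to $\lim_{x\to\xi}U_{\mu_{K}}^{D}(x)=\alpha$. Combining this with Theorem \ref{boundarybehavior} applied to $U_{\mu_{K}}^{D_{\alpha}}$ (which gives $U_{\mu_{K}}^{D_{\alpha}}\to 0$ n.e.\ on $\partial D_{\alpha}$) yields $v\to 0$ at nearly every boundary point, completing the argument. The delicate step is precisely this treatment of the level-set portion $\partial D_{\alpha}\cap D$, where one must convert the a priori one-sided estimates into a genuine limit via the quasi-continuity of the Green potential.
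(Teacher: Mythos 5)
Your strategy is viable and takes a genuinely different route from the paper's. You prove the exact identity $U_{\mu_{K}}^{D_{\alpha}}=U_{\mu_{K}}^{D}-\alpha$ on $D_{\alpha}$ for the \emph{fixed} measure $\mu_{K}$ (the difference being harmonic on all of $D_{\alpha}$, which is exactly Lemma \ref{harmonicdifference} with $D_{2}=D_{\alpha}$), and then invoke the converse characterization of the Green equilibrium measure: a unit measure of finite energy on $K$ whose potential is n.e.\ constant on $K$ is the equilibrium measure, the constant being the energy. The paper instead compares the two harmonic functions $U_{\mu_{\alpha}}^{D_{\alpha}}$ and $U_{\mu_{K}}^{D}-\alpha$ on $D_{\alpha}\setminus K$, obtains proportionality from the extended maximum principle, and then identifies $\mu_{\alpha}$ as a multiple of $\mu_{K}$ by taking distributional Laplacians, the unit-mass normalization fixing the constant; this avoids the converse Frostman-type characterization altogether, whereas your argument relies on it and should state it with its finite-energy hypothesis and a reference (finiteness of the energy of $\mu_{K}$ relative to $D_{\alpha}$ does follow from $G_{D_{\alpha}}\le G_{D}$, as you note). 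Your splitting of $\partial D_{\alpha}$ into $\partial D_{\alpha}\cap\partial D$ (polar, by Theorem \ref{boundarybehavior}) and $\partial D_{\alpha}\cap D$ is essentially what the paper uses implicitly when it asserts boundary values $0$ on $\partial D_{\alpha}$, and your treatment of the first piece is correct.

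The one step that is wrong as written is your justification on the level-set piece $\partial D_{\alpha}\cap D$. The principle ``a superharmonic function is continuous off a set of zero capacity'' is not true in the sense you use it: quasi-continuity asserts that the \emph{restriction} of the function to the complement of an open set of small capacity is continuous, not that the function is continuous on $D$ at nearly every point; and for a general superharmonic $u$ the upper limit of $u$ along $\{u>\alpha\}$ can exceed $\alpha$ on a set of boundary points of positive capacity (take the potential of a measure with atoms on a dense sequence). Fortunately the statement you need is true here for a simpler reason that is already in your hands: since $K\subset D_{\alpha}$ and $D_{\alpha}$ is open, every $\xi\in\partial D_{\alpha}\cap D$ lies in $D\setminus K$, where $U_{\mu_{K}}^{D}$ is harmonic and hence continuous; then $U_{\mu_{K}}^{D}(\xi)\le\alpha$ (because $\xi\notin D_{\alpha}$) together with $U_{\mu_{K}}^{D}>\alpha$ on $D_{\alpha}$ forces $\lim_{D_{\alpha}\ni x\to\xi}U_{\mu_{K}}^{D}(x)=U_{\mu_{K}}^{D}(\xi)=\alpha$ at \emph{every} such $\xi$, with no exceptional set. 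With this repair (and Theorem \ref{boundarybehavior} applied to $U_{\mu_{K}}^{D_{\alpha}}$ on $D_{\alpha}$), your extended-maximum-principle argument gives $v\equiv 0$ and the rest of your proof goes through.
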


\begin{proof}
The Green equilibrium potential $U_{\mu_{K}}^{D}$ is
a superharmonic function on $D$ so, in particular, it
is lower semicontinuous on $D$. Therefore the sets
$$D_{\alpha}=\{x\in D:U_{\mu_{K}}^{D}(x)>\alpha\}$$
are open for all $\alpha\in(0,I(K,D))$.

Choose $\alpha\in(0,I(K,D))$ such that $K\subset D_{\alpha}$.
Consider the function $U:\overline{D_{\alpha}}\mapsto\IR$
with
$$U(x)=U_{\mu_{K}}^{D}(x)-\alpha.$$
Then $U$ is harmonic on $D_{\alpha}\setminus K$. Also,
by property (\ref{equilibpotentialbehavior}) of the
Green equilibrium potentials, $U$ has boundary values
($I(K,D)-\alpha$) n.e. on $\partial K$ and $0$ on $\partial D_{\alpha}$.
Therefore, by the extended maximum principle and the boundary behavior
of the Green equilibrium potential $U_{\mu_{\alpha}}^{D_{\alpha}}$
on $D_{\alpha}\setminus K$,
$$U_{\mu_{\alpha}}^{D_{\alpha}}(x)=\frac{I(K,D_{\alpha})}{I(K,D)-\alpha}U(x).$$
Applying the distributional Laplacian on $D_{\alpha}$ we get
$$\Delta(U_{\mu_{\alpha}}^{D_{\alpha}})=
\Delta\Big(\frac{I(K,D_{\alpha})}{I(K,D)-\alpha}(U_{\mu_{K}}^{D}-\alpha)\Big),$$
so
$$-\kappa_{n}\mu_{\alpha}=-\kappa_{n}\frac{I(K,D_{\alpha})}{I(K,D)-\alpha}\mu_{K}$$
and
$$\mu_{\alpha}=\frac{I(K,D_{\alpha})}{I(K,D)-\alpha}\mu_{K}.$$
Since $\mu_{\alpha}$ and $\mu_{K}$ are unit measures, we
conclude that $\mu_{\alpha}=\mu_{K}$ and
$I(K,D_{\alpha})=I(K,D)-\alpha$.
\end{proof}

In order to proof Theorem \ref{idiometrogreen} we shall
need some more lemmas. In the following lemma we
show that the difference of two Green potentials of the same
measure is a harmonic function.

\begin{limma}\label{harmonicdifference}
Let $D_{1}$ and $D_{2}$ be two Greenian open subsets of $\IR^{n}$
such that $D_{1}\cap D_{2}\neq\varnothing$. Also, let $\mu$
be a Borel measure with compact support in $D_{1}\cap D_{2}$
such that the Green potentials $U_{\mu}^{D_{1}}$ and
$U_{\mu}^{D_{2}}$ are finite on $D_{1}\cap D_{2}$. Then
the difference
$$U_{\mu}^{D_{1}}-U_{\mu}^{D_{2}}$$
is a harmonic function on $D_{1}\cap D_{2}$.
\end{limma}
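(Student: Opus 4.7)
The strategy is to exploit the standard decomposition $G_{D_i}(x,y)=\mathcal{K}(x,y)+H_i(x,y)$ recalled in Section 2, in which $H_i(\cdot,y)$ is harmonic on $D_i$ for each fixed $y$. On the overlap $D_1\cap D_2$ the singular kernel $\mathcal{K}$ is common to both Green functions, so it should cancel in the difference $G_{D_1}(\cdot,y)-G_{D_2}(\cdot,y)$, leaving a function harmonic in the first variable on $D_1\cap D_2$. Integrating this against $d\mu(y)$ then ought to produce a harmonic function on $D_1\cap D_2$, which is exactly what is to be shown.

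Concretely, fix $x_0\in D_1\cap D_2$ and a closed ball $\overline{B(x_0,r)}\subset D_1\cap D_2$. For each such $x$, the symmetry $G_{D_i}(x,y)=G_{D_i}(y,x)$ implies that $H_i(x,\cdot)$ is harmonic, hence continuous and bounded, on the compact set $\mathrm{supp}(\mu)$. Combined with the finiteness of $U_{\mu}^{D_i}(x)$, this forces $\mathcal{K}(x,\cdot)$ to be $\mu$-integrable, so we may legitimately split
$$U_{\mu}^{D_i}(x)=\int \mathcal{K}(x,y)\,d\mu(y)+\int H_i(x,y)\,d\mu(y),\qquad i=1,2,$$
and subtract to obtain, pointwise on $D_1\cap D_2$,
$$U_{\mu}^{D_1}(x)-U_{\mu}^{D_2}(x)=\int\bigl[H_1(x,y)-H_2(x,y)\bigr]\,d\mu(y).$$
Since $H_i(\cdot,y)$ enjoys the spherical mean value property on $\overline{B(x_0,r)}$, Fubini yields
$$\frac{1}{\sigma_n r^{n-1}}\int_{S(x_0,r)}\bigl(U_{\mu}^{D_1}-U_{\mu}^{D_2}\bigr)\,d\sigma=\int\bigl[H_1(x_0,y)-H_2(x_0,y)\bigr]\,d\mu(y)=U_{\mu}^{D_1}(x_0)-U_{\mu}^{D_2}(x_0).$$
Each $U_{\mu}^{D_i}$ is finite superharmonic on $D_1\cap D_2$ and therefore locally integrable there, so the difference is a locally integrable function verifying the spherical mean value property on every small ball, and is consequently harmonic.

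An equivalent but slicker route is distributional: from Section 2 one has $\Delta U_{\mu}^{D_i}=-\kappa_n\mu$ on $D_i$, whence $\Delta(U_{\mu}^{D_1}-U_{\mu}^{D_2})=0$ as distributions on $D_1\cap D_2$; Weyl's lemma then upgrades this to classical harmonicity, using finiteness to secure local integrability. The main technical nuisance I anticipate is the bookkeeping at points $x\in\mathrm{supp}(\mu)\cap D_1\cap D_2$, where $\mathcal{K}(x,\cdot)$ is genuinely singular: one must verify $\mu$-integrability and the Fubini interchange uniformly for $x$ in a neighbourhood of such a point. Once the cancellation of the Riesz (kernel) parts of $U_{\mu}^{D_1}$ and $U_{\mu}^{D_2}$ is rigorously justified, harmonicity of what remains is automatic from the harmonicity of $H_1(\cdot,y)-H_2(\cdot,y)$.
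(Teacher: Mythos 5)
Your proof is correct and follows essentially the same route as the paper: decompose $G_{D_i}=\mathcal{K}+H_i$, cancel the common kernel, and conclude that $x\mapsto\int\bigl[H_1(x,y)-H_2(x,y)\bigr]\,d\mu(y)$ is harmonic on $D_1\cap D_2$. The only difference is that the paper cites a lemma of Helms for the harmonicity of this integral, while you verify the mean value property (or invoke the distributional Laplacian) directly and also spell out the integrability needed to split the integral; this is the same argument in substance, with a bit more bookkeeping made explicit.
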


\begin{proof}
Let
$$G_{i}(x,y)=\mathcal{K}(x,y)+H_{i}(x,y)$$
be the Green function of $D_{i}$,
$i=1,2$, respectively. Then
\begin{eqnarray*}
  U_{\mu}^{D_{1}}(x)-U_{\mu}^{D_{2}}(x) &=& \int_{K}\mathcal{K}(x,y)d\mu(y)+\int_{K}H_{1}(x,y)d\mu(y) \\
   && -\int_{K}\mathcal{K}(x,y)d\mu(y)-\int_{K}H_{2}(x,y)d\mu(y) \\
   &=& \int_{K}[H_{1}(x,y)-H_{2}(x,y)]d\mu(y).
\end{eqnarray*}
The difference $H_{1}(x,y)-H_{2}(x,y)$ is a harmonic
function on $D_{1}\cap D_{2}$ on both variables $x$
and $y$, separately. Therefore,
$$x\mapsto U_{\mu}^{D_{1}}(x)-U_{\mu}^{D_{2}}(x)
=\int_{K}[H_{1}(x,y)-H_{2}(x,y)]d\mu(y)$$
is a harmonic function on $D_{1}\cap D_{2}$;
see (\cite[Lemma 6.7, p. 103]{2}).
\end{proof}

We shall need the fact that particular parts of
the boundary of an open set belong to the reduced
kernel \cite[p. 164]{3} of the boundary.

\begin{limma}\label{capacitydensityboundary}
Let $\Omega$ be an open subset of $\IR^{n}$.
Suppose that $\IR^{n}\setminus\overline{\Omega}\neq\varnothing$
and let $O$ be a connected component
of $\IR^{n}\setminus\overline{\Omega}$. Then for all $\xi\in\partial O$
and for all $r>0$,
$$C_{2}(B(\xi,r)\cap\partial O)>0.$$
\end{limma}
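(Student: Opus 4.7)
The plan is to argue by contradiction using Theorem \ref{connectedness}. Suppose, toward a contradiction, that there exist $\xi \in \partial O$ and $r>0$ with $C_{2}(B(\xi,r)\cap\partial O)=0$. Since $\partial O$ is closed in $\IR^{n}$, the set $E:=B(\xi,r)\cap\partial O$ is relatively closed in the domain $B(\xi,r)$. Thus Theorem \ref{connectedness}, applied to the ball $B(\xi,r)$ as the ambient domain and $E$ as the exceptional set, gives that
$$B(\xi,r)\setminus\partial O$$
is connected.

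To reach a contradiction I would exhibit a disconnection of this set. Since $O$ is open, $\partial O=\overline{O}\setminus O$, hence
$$B(\xi,r)\setminus\partial O \;=\; \bigl(B(\xi,r)\cap O\bigr)\;\cup\;\bigl(B(\xi,r)\setminus\overline{O}\bigr),$$
and the two pieces are disjoint open subsets of $B(\xi,r)$. It remains to show each piece is nonempty. The first is easy: $\xi\in\partial O\subset\overline{O}$, so every ball around $\xi$ meets $O$.

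The slightly more delicate step, and the one I would view as the main obstacle, is showing that $B(\xi,r)\setminus\overline{O}$ is nonempty; this is where the hypothesis on $\Omega$ must enter. First I would establish that $\overline{O}\cap\Omega=\varnothing$: if $x\in\overline{O}\cap\Omega$, then $\Omega$ being open yields a ball $B(x,\rho)\subset\Omega\subset\IR^{n}\setminus O$, contradicting $x\in\overline{O}$. Next I would show $\xi\in\overline{\Omega}$: otherwise $\xi$ lies in the open set $\IR^{n}\setminus\overline{\Omega}$, so some ball around $\xi$ sits inside a single connected component of $\IR^{n}\setminus\overline{\Omega}$; since this ball also meets $O$ (as $\xi\in\overline{O}$), that component must equal $O$, forcing $\xi\in O$, contrary to $\xi\in\partial O$. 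Combining $\xi\in\overline{\Omega}$ with $\overline{O}\cap\Omega=\varnothing$ (hence $\xi\notin\Omega$) places $\xi\in\partial\Omega$, so $B(\xi,r)\cap\Omega\neq\varnothing$, and every such point lies in $B(\xi,r)\setminus\overline{O}$.

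Having both components nonempty contradicts the connectedness of $B(\xi,r)\setminus\partial O$ obtained above, and the lemma follows.
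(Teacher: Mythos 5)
Your proposal is correct and follows essentially the same route as the paper: apply Theorem \ref{connectedness} to the ball $B(\xi,r)$ with the relatively closed set $B(\xi,r)\cap\partial O$, and derive a contradiction from the fact that $B(\xi,r)\setminus\partial O$ meets both $O$ and $\Omega$ (hence is disconnected by the decomposition into $B(\xi,r)\cap O$ and $B(\xi,r)\setminus\overline{O}$). The paper states the disconnection in one line, while you supply the verification that $\xi\in\overline{\Omega}$ and $\Omega\cap\overline{O}=\varnothing$; this is a welcome filling-in of detail, not a different argument.
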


\begin{proof}
Let $\xi\in\partial O$ and $r>0$. Then $B(\xi,r)\setminus\partial O$
intersects $\Omega$ and $O$, so it is not connected.
Then, since $B(\xi,r)\cap\partial O$ is
a relatively closed subset of $B(\xi,r)$, it follows from
Theorem \ref{connectedness} that
$$C_{2}(B(\xi,r)\cap\partial O)>0.$$
\end{proof}

The next lemma gives a characterization of
nearly everywhere equal sets: the Green
potential of any measure with compact support
is the same.

\begin{limma}\label{nearlysame}
Let $D_{1}$ and $D_{2}$ be two Greenian domains
of $\IR^{n}$.
Then $D_{1}{\stackrel{n.e.}{=}}D_{2}$
if and only if there exists a Borel measure $\mu$ with
compact support in $D_{1}\cap D_{2}$ such that
$U_{\mu}^{D_{1}},U_{\mu}^{D_{2}}$ are non constant
functions on each connected component of $D_{1}\cap D_{2}$
and $U_{\mu}^{D_{1}}=U_{\mu}^{D_{2}}$
on an open ball $B\subset D_{1}\cap D_{2}$.
\end{limma}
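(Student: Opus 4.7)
The plan is to prove the two directions separately.

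For the forward direction ($\Rightarrow$), assume $D_{1}\stackrel{n.e.}{=}D_{2}$. Then $D_{1}\setminus D_{2}$ is a relatively closed polar subset of the domain $D_{1}$, so by Theorem \ref{connectedness} the set $D_{1}\cap D_{2}$ is connected. Since removing a closed polar set from a Greenian domain leaves its Green function unchanged, $G_{D_{1}}\equiv G_{D_{2}}$ on $(D_{1}\cap D_{2})^{2}$. I would then exhibit $\mu$ as the unit point mass at any chosen $y_{0}\in D_{1}\cap D_{2}$: the potentials $U_{\mu}^{D_{i}}(x)=G_{D_{i}}(x,y_{0})$ agree throughout the intersection (hence on any open ball $B\subset D_{1}\cap D_{2}$), and they are non constant on the unique component because of the Green-function singularity at $y_{0}$.

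For the reverse direction ($\Leftarrow$), let $W$ be the connected component of $D_{1}\cap D_{2}$ containing $B$. By Lemma \ref{harmonicdifference}, $h:=U_{\mu}^{D_{1}}-U_{\mu}^{D_{2}}$ is harmonic on $D_{1}\cap D_{2}$, and by hypothesis $h\equiv 0$ on $B$, so by the identity principle for harmonic functions $h\equiv 0$ on all of $W$. The non-constancy hypothesis forces $\mu\neq 0$, hence by the minimum principle for superharmonic functions $U_{\mu}^{D_{1}}>0$ on $D_{1}$ and $U_{\mu}^{D_{2}}>0$ on $D_{2}$. The next step is to show $\partial W\cap D_{1}$ is polar (and, by symmetry, $\partial W\cap D_{2}$). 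For $\xi\in\partial W\cap D_{1}$: since $W$ is both open and relatively closed in $D_{1}\cap D_{2}$, $\partial W$ is disjoint from $D_{1}\cap D_{2}$, so $\xi\notin D_{2}$; being a limit of points of $W\subset D_{2}$ forces $\xi\in\partial D_{2}$. Applying Theorem \ref{boundarybehavior} on $D_{2}$, $U_{\mu}^{D_{2}}(x)\to 0$ as $x\to\xi$ through $D_{2}$ for n.e. such $\xi$. Combining this with the equality $U_{\mu}^{D_{1}}=U_{\mu}^{D_{2}}$ on $W$ and the continuity of $U_{\mu}^{D_{1}}$ at $\xi$ (valid because ${\rm{supp}}(\mu)\subset D_{1}\cap D_{2}$ while $\xi\notin D_{1}\cap D_{2}$), approaching $\xi$ through $W$ yields $U_{\mu}^{D_{1}}(\xi)=0$, contradicting positivity of $U_{\mu}^{D_{1}}$ on $D_{1}$ except on a polar exceptional set.

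The final, and main, obstacle is to upgrade this boundary statement to ``$D_{1}\setminus W$ is polar.'' I would decompose $D_{1}\setminus W=(\partial W\cap D_{1})\cup(D_{1}\setminus\overline{W})$. If $D_{1}\setminus\overline{W}$ were non-empty, then $D_{1}\setminus(\partial W\cap D_{1})=W\sqcup(D_{1}\setminus\overline{W})$ would be a disjoint union of two non-empty open sets, contradicting Theorem \ref{connectedness} applied to the relatively closed polar subset $\partial W\cap D_{1}$ of the domain $D_{1}$. Hence $D_{1}\setminus W=\partial W\cap D_{1}$ is polar, and symmetrically $D_{2}\setminus W$ is polar. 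Since $D_{1}\setminus D_{2}\subset D_{1}\setminus W$ and $D_{2}\setminus D_{1}\subset D_{2}\setminus W$, we conclude $D_{1}\stackrel{n.e.}{=}D_{2}$.
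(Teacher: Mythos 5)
Your proof is correct, and its core is the same as the paper's: form $h=U_{\mu}^{D_{1}}-U_{\mu}^{D_{2}}$, use Lemma \ref{harmonicdifference} and the identity principle to get $h\equiv 0$ on the component $W$ (the paper's $A$) containing $B$, and then play the boundary limit $0$ of $U_{\mu}^{D_{2}}$ from Theorem \ref{boundarybehavior} against the strict positivity and continuity of $U_{\mu}^{D_{1}}$ at points of $D_{1}\cap\partial W\subset\partial D_{2}$. Where you genuinely diverge is the endgame. The paper argues by contradiction: assuming $C_{2}(D_{1}\setminus A)>0$, it invokes the auxiliary Lemma \ref{capacitydensityboundary} (in the case $D_{1}\setminus\overline{A}\neq\varnothing$) to show $C_{2}(D_{1}\cap\partial A)>0$, and then needs only one non-exceptional point $\xi_{0}$ of that set to reach the contradiction. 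You instead run the boundary-behaviour argument at \emph{every} point of $\partial W\cap D_{1}$, concluding that this set is contained in the polar exceptional set of Theorem \ref{boundarybehavior} and hence is itself polar, and then apply Theorem \ref{connectedness} to the relatively closed polar set $\partial W\cap D_{1}$ inside the domain $D_{1}$ to force $D_{1}\setminus\overline{W}=\varnothing$. This bypasses Lemma \ref{capacitydensityboundary} entirely, at the cost of a second direct appeal to Theorem \ref{connectedness}; the two routes are of comparable length, and yours is arguably the more streamlined logic (no proof by contradiction on the capacity of $D_{1}\setminus W$). In the forward direction you are also slightly more careful than the paper: you actually exhibit a measure (a unit point mass) whose potentials are non-constant on the single component of $D_{1}\cap D_{2}$, whereas the paper only verifies the equality of potentials for an arbitrary $\mu$ and leaves the existence of a suitable $\mu$ implicit.
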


\begin{proof} Suppose that
$D_{1}{\stackrel{n.e.}{=}}D_{2}$. Then
$G_{D_{1}}(x,y)=G_{D_{2}}(x,y)$
for all $x,y\in D_{1}\cap D_{2}$
(\cite[Corollary 5.2.5, p. 128]{1}). Therefore
$$U_{\mu}^{D_{1}}(x)=\int G_{D_{1}}(x,y)d\mu(y)=
\int G_{D_{2}}(x,y)d\mu(y)=U_{\mu}^{D_{2}}(x),$$
for all $x\in D_{1}\cap D_{2}$ and for all
Borel measures $\mu$ with
compact support in $D_{1}\cap D_{2}$.

\medskip

Conversely, suppose that $\mu$ is a Borel measure with
compact support in $D_{1}\cap D_{2}$ such that
$U_{\mu}^{D_{1}},U_{\mu}^{D_{2}}$ are non constant
functions on each connected component of $D_{1}\cap D_{2}$
and $U_{\mu}^{D_{1}}=U_{\mu}^{D_{2}}$
on a ball $B\subset D_{1}\cap D_{2}$.
By Lemma \ref{harmonicdifference}, the function
$$u=U_{\mu}^{D_{1}}-U_{\mu}^{D_{2}}$$
is harmonic on $D_{1}\cap D_{2}$
and vanishes on the ball $B$.
Therefore, by the identity principle
(\cite[Lemma 1.8.3, p. 27]{1}),
$u$ vanishes on the connected component $A$
of $D_{1}\cap D_{2}$ that contains $B$. That
is, $U_{\mu}^{D_{1}}=U_{\mu}^{D_{2}}$ on $A$.

Suppose that $C_{2}(D_{1}\setminus A)>0$. We shall show
that $C_{2}(D_{1}\cap\partial A)>0$. Consider the decomposition
$$D_{1}\setminus A=(D_{1}\cap\partial A)\cup
(D_{1}\cap(\IR^{n}\setminus \overline{A})).$$
If $D_{1}\cap(\IR^{n}\setminus \overline{A})=\varnothing$,
then
$$C_{2}(D_{1}\cap\partial A)=C_{2}(D_{1}\setminus A)>0.$$
Suppose that $D_{1}\cap(\IR^{n}\setminus \overline{A})\neq\varnothing$.
Let $O$ be a connected
component of $\IR^{n}\setminus \overline{A}$ that intersects $D_{1}$.
Since $D_{1}$ intersects $O$ and $\IR^{n}\setminus O$, $D_{1}\cap\partial O\neq\varnothing$.
Let $\xi\in D_{1}\cap\partial O$ and $r>0$ such that $B(\xi,r)\subset D_{1}$.
By Lemma \ref{capacitydensityboundary},
$$C_{2}(D_{1}\cap\partial A)\geq C_{2}(B(\xi,r)\cap\partial O)>0.$$
Therefore, in any case $C_{2}(D_{1}\cap\partial A)>0$. Then,
since $\partial A\subset(\partial D_{1}\cup\partial D_{2})$
and $D_{1}\cap\partial A\subset D_{1}$, we have that
$D_{1}\cap\partial A$ is a subset of $\partial D_{2}$
with positive capacity. From Theorem \ref{boundarybehavior}
we have that there exist $\xi_{0}\in D_{1}\cap\partial A$
such that
$$\lim_{A\ni x\to\xi_{0}}U_{\mu}^{D_{2}}(x)=
\lim_{D_{2}\ni x\to\xi_{0}}U_{\mu}^{D_{2}}(x)=0.$$
Since ${\rm{supp}}(\mu)$ is a compact subset of $D_{1}\cap D_{2}$,
$\xi_{0}\in D_{1}$ and $\xi_{0}\in\partial D_{2}$, $U_{\mu}^{D_{1}}$
is harmonic on an open neighborhood of $\xi_{0}$. So
$$\lim_{A\ni x\to\xi_{0}}U_{\mu}^{D_{1}}(x)=
\lim_{D_{1}\ni x\to\xi_{0}}U_{\mu}^{D_{1}}(x)=
U_{\mu}^{D_{1}}(\xi_{0})>0.$$
Then, since $U_{\mu}^{D_{1}}=U_{\mu}^{D_{2}}$ on $A$,
we obtain
$$0=\lim_{A\ni x\to\xi_{0}}U_{\mu}^{D_{2}}(x)=
\lim_{A\ni x\to\xi_{0}}U_{\mu}^{D_{1}}(x)>0,$$
which is a contradiction.

Therefore $C_{2}(D_{1}\setminus A)=0$. Since
$A\subset D_{1}$, we get $D_{1}{\stackrel{n.e.}{=}}A$.
In a similar way we can show that
$D_{2}{\stackrel{n.e.}{=}}A$. Therefore, in
particular, $D_{1}{\stackrel{n.e.}{=}}D_{2}$.
\end{proof}

We proceed to prove Theorem \ref{idiometrogreen}.

\medskip

{\textbf{Proof of Theorem \ref{idiometrogreen}.}} (i) Suppose that
$D_{1}{\stackrel{n.e.}{=}}D_{2}$. Then the
relations $\mu_{1}=\mu_{2}$ and $I(K,D_{1})=I(K,D_{2})$
follow from the equality
$$G_{D_{1}}(x,y)=G_{D_{2}}(x,y)$$
for all $x,y\in K$
(\cite[Corollary 5.2.5, p. 128]{1}).

\medskip

Conversely, suppose that $\mu_{1}=\mu_{2}$ and $I(K,D_{1})=I(K,D_{2})$.
By property (\ref{equilibpotentialbehavior}) of the
Green equilibrium potentials,
$$U_{\mu_{1}}^{D_{1}}(x)-U_{\mu_{2}}^{D_{2}}(x)=
I(K,D_{1})-I(K,D_{2})=0,$$
for all $x\in K^{\circ}$. Then, by Lemma \ref{nearlysame},
$D_{1}{\stackrel{n.e.}{=}}D_{2}$.

\bigskip

(ii) Let $\tilde{\mu_{2}}$ be the
Green equilibrium measure of $K$ relative to $\tilde{D_{2}}$.
By Lemma
\ref{leveldomain}, $\tilde{\mu_{2}}=\mu_{2}$ and
$I(K,\tilde{D_{2}})=I(K,D_{1})$. Suppose that
$D_{1}{\stackrel{n.e.}{=}}\tilde{D_{2}}$.
Since $D_{1}$ is a domain, by
Theorem \ref{connectedness}, $\tilde{D_{2}}$ is also a domain.
Then by (i), $\mu_{1}=\tilde{\mu_{2}}=\mu_{2}$.

\medskip

Conversely, suppose that $\mu_{1}=\mu_{2}$. Let $O$ be a connected component
of $\tilde{D_{2}}$ that intersects the interior of $K$.
From property (\ref{equilibpotentialbehavior}) of the
Green equilibrium potentials,
$$U_{\mu_{1}}^{D_{1}}(x)-U_{\tilde{\mu_{2}}}^{\tilde{D_{2}}}(x)=
I(K,D_{1})-I(K,\tilde{D_{2}})=0,$$
for all $x\in K^{\circ}\cap O$.
Also, $U_{\tilde{\mu_{2}}}^{\tilde{D_{2}}}=U_{\tilde{\mu_{2}}|_{O}}^{O}$ on $O$
and $\mu_{1}=\mu_{2}=\tilde{\mu_{2}}$.
By Lemma \ref{nearlysame},
$D_{1}{\stackrel{n.e.}{=}}O$.
Therefore
$$D_{1}\cap (\tilde{D_{2}}\setminus O)=\varnothing$$
and $K\subset O$, since $K\subset\tilde{D_{2}}$. Suppose that $\tilde{D_{2}}\neq O$
and let $A$ be a second connected component of $\tilde{D_{2}}$.
Then $K\cap A=\varnothing$, $U_{\mu_{2}}^{D_{2}}$ is harmonic
on $A$ and
$$U_{\mu_{2}}^{D_{2}}=I(K,D_{2})-I(K,D_{1})$$
on $\partial A$. By the maximum principle,
$U_{\mu_{2}}^{D_{2}}=I(K,D_{2})-I(K,D_{1})$ on $A$ and
by the identity principle
$U_{\mu_{2}}^{D_{2}}=I(K,D_{2})-I(K,D_{1})$ on the connected
component $B$ of $D_{2}\setminus K$
that contains $A$. Also $\partial B\subset(\partial D_{2}\cup\partial K)$.
But
$$\lim_{D_{2}\ni x\to\zeta}U_{\mu_{2}}^{D_{2}}(x)=0\neq I(K,D_{2})-I(K,D_{1}),
\mbox{ for n.e. } \zeta\in\partial D_{2}$$
and
$$U_{\mu_{2}}^{D_{2}}=I(K,D_{2})\neq I(K,D_{2})-I(K,D_{1}),\mbox{ n.e. on }\partial K,$$
which is a
contradiction. Therefore $\tilde{D_{2}}=O$ and $D_{1}{\stackrel{n.e.}{=}}\tilde{D_{2}}$.

\begin{remark}
{\rm{Let $B$ be a ball that contains $K$. If the open set $B\setminus K$ is regular
for the Dirichlet problem, the property
$K\subset\tilde{D_{2}}$ is always true.}}
\end{remark}

\section{Compact sets with empty interior and counterexamples}

Let $K$ be a compact subset of $\IR^{n}$ and
let $D_{1},D_{2}$ be two Greenian subdomains of
$\IR^{n}$ that contain $K$. We shall examine
the case where the domains $D_{1},D_{2}$ are
not nearly everywhere equal and the Green
equilibrium measures of $K$ relative to
$D_{1},D_{2}$ are the same. It turns out that,
in most cases, $K$ must be sufficiently smooth.

\begin{theor}\label{emptyinterior}
Let $K$ be a compact subset of $\IR^{n}$ with positive
capacity and
let $D_{1},D_{2}$ be two Greenian subdomains of
$\IR^{n}$ that are not nearly
everywhere equal and contain $K$.
Also let $\mu_{1}$ and $\mu_{2}$ be the Green equilibrium
measures of $K$ relative to $D_{1}$ and $D_{2}$,
respectively. If $\mu_{1}=\mu_{2}$ and $I(K,D_{1})=I(K,D_{2})$, then there exists a
level set $L$ of a non constant harmonic function
such that $C_{2}(K\setminus L)=0$.
\end{theor}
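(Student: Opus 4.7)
The plan is to exhibit the required level set as the zero set of the harmonic function
$$u = U_{\mu_{1}}^{D_{1}} - U_{\mu_{2}}^{D_{2}},$$
viewed on the open set $D_{1}\cap D_{2}\supset K$. First I would verify that this difference is well-defined and harmonic: since $K$ has positive capacity, the Green energies $I(K,D_{i})$ are finite and the Green equilibrium potentials $U_{\mu_{i}}^{D_{i}}$ are finite on $D_{1}\cap D_{2}$, so Lemma \ref{harmonicdifference} applied with $\mu=\mu_{1}=\mu_{2}$ gives that $u$ is harmonic on $D_{1}\cap D_{2}$. Next, property (\ref{equilibpotentialbehavior}) yields $U_{\mu_{i}}^{D_{i}}=I(K,D_{i})$ n.e.\ on $K$ for $i=1,2$, so combined with the assumption $I(K,D_{1})=I(K,D_{2})$ we obtain $u=0$ n.e.\ on $K$. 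Setting $L=\{x\in D_{1}\cap D_{2}:u(x)=0\}$ then immediately gives $C_{2}(K\setminus L)=0$.

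The main obstacle is to verify that $u$ is non-constant on $D_{1}\cap D_{2}$. Because $u$ vanishes on a subset of $K$ of positive capacity, $u$ being constant would force $u\equiv 0$ on $D_{1}\cap D_{2}$. I would rule this out by reproducing the argument from the proof of Lemma \ref{nearlysame} applied with $\mu=\mu_{1}$: on any connected component $A$ of $D_{1}\cap D_{2}$ the assumption $u\equiv 0$ reads $U_{\mu_{1}}^{D_{1}}=U_{\mu_{2}}^{D_{2}}$ on $A$, and if $C_{2}(D_{1}\setminus A)>0$ then Lemma \ref{capacitydensityboundary} produces a point $\xi_{0}\in D_{1}\cap\partial A\subset\partial D_{2}$ along which $U_{\mu_{2}}^{D_{2}}(x)\to 0$ by Theorem \ref{boundarybehavior}. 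Since $\xi_{0}\notin D_{1}\cap D_{2}\supset{\rm{supp}}(\mu_{1})$, the potential $U_{\mu_{1}}^{D_{1}}$ is harmonic at $\xi_{0}$ with $U_{\mu_{1}}^{D_{1}}(\xi_{0})>0$ (the Green potential of a nonzero positive measure is strictly positive on the underlying Greenian domain), contradicting $u\equiv 0$ on $A$. Hence $D_{1}\stackrel{n.e.}{=}A$, and swapping the roles of $D_{1}$ and $D_{2}$ gives $D_{2}\stackrel{n.e.}{=}A$, so $D_{1}\stackrel{n.e.}{=}D_{2}$, contradicting the hypothesis.

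A minor technical point is that the statement of Lemma \ref{nearlysame} assumes the Green potentials are non-constant on each connected component of $D_{1}\cap D_{2}$, a hypothesis I would not verify directly for $\mu_{1}$; however, inspection of its proof shows this is used only to guarantee $U_{\mu}^{D_{1}}(\xi_{0})>0$, which holds here automatically because $\mu_{1}$ is a unit positive Borel measure. With $u$ shown to be non-constant, the set $L$ is the required level set of a non-constant harmonic function satisfying $C_{2}(K\setminus L)=0$.
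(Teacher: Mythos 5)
Your proof is correct and follows essentially the same route as the paper: the same harmonic difference $h=U_{\mu_{1}}^{D_{1}}-U_{\mu_{2}}^{D_{2}}$ (Lemma \ref{harmonicdifference}), vanishing nearly everywhere on $K$ by (\ref{equilibpotentialbehavior}), and shown non-constant by the mechanism of Lemma \ref{nearlysame}. The only cosmetic difference is that the paper first restricts $h$ to the union of components of $D_{1}\cap D_{2}$ meeting $K$ in positive capacity before taking the zero level set, whereas you work on all of $D_{1}\cap D_{2}$; your observation that the non-constancy hypothesis in Lemma \ref{nearlysame} is needed only to ensure $U_{\mu}^{D_{1}}(\xi_{0})>0$, which holds automatically for the unit measure $\mu_{1}$, is an accurate reading of that proof.
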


\begin{proof}
By Lemma \ref{harmonicdifference}, the function
$$h(x)=U_{\mu_{1}}^{D_{1}}(x)-U_{\mu_{2}}^{D_{2}}(x)$$
is harmonic on $D_{1}\cap D_{2}$. Since $D_{1},D_{2}$
are not nearly everywhere equal, Lemma
\ref{nearlysame} shows that $h$ cannot be $0$ on a non-empty open
subset of $D_{1}\cap D_{2}$. From property (\ref{equilibpotentialbehavior}) of the
Green equilibrium potentials,
$$h(x)=U_{\mu_{1}}^{D_{1}}(x)-U_{\mu_{2}}^{D_{2}}(x)=
I(K,D_{1})-I(K,D_{2})=0,$$
for nearly every point $x\in K$.
So, $h$ is a
non constant harmonic function on every connected component $A$ of $D_{1}\cap D_{2}$
such that $C_{2}(A\cap K)>0$.
Let $G$ be the union of all the connected components $A$ of $D_{1}\cap D_{2}$
such that $C_{2}(A\cap K)>0$ and let
$$L=\{x\in G:h(x)=0\}.$$
Then $h$ is a
non constant harmonic function on $G$, $L$ is a level set of $h$ and
$C_{2}(K\setminus L)=0$.
\end{proof}

\begin{remark}\label{extension}
{\rm{It follows from Theorem \ref{emptyinterior} that Theorem
\ref{idiometrogreen} is valid for all
compact subsets $K$ that cannot be contained,
except on a set of zero capacity, on a level
set of a non constant harmonic function.
An example of a compact set in the above
class is every $(n-1)$-dimensional compact
submanifold of $\IR^{n}$ that is not real analytic
on a relatively open subset. Another example
is every compact set $K\subset\IR^{2}$
which has at least one connected component
that is neither a
singleton nor a piecewise analytic arc.}}
\end{remark}

\medskip

We proceed to give examples of compact
sets $K$ and pairs of domains that are
answers to the first question we posed
in the introduction. Keeping in mind
Remark \ref{extension}, the compact
sets will lie on sufficiently smooth
subsets of $\IR^{n}$, mainly spheres
and hyperplanes.

\medskip

Let $K$ be a compact subset of a sphere $S(x_{0},r)$
with $C_{2}(K)>0$. Let $D$ be a Greenian open subset of
$\IR^{n}$ that contains $K$. We denote by $D^{*}$ the
inverse of $D$ with respect to $S(x_{0},r)$; see e.g.
\cite[p. 19]{1}. Then $D^{*}$ is Greenian and (see e.g.
\cite[p. 95]{1})
$$G_{D^{*}}(x,y)=\Big(\frac{r^{2}}{|x-x_{0}|\mbox{ }
|y-x_{0}|}\Big)^{n-2}G_{D}(x^{*},y^{*}),\qquad x,y\in D^{*}.$$
Moreover, $G_{D}(x,y)=G_{D^{*}}(x,y)$ for every $x,y\in K$.
So $I(K,D)=I(K,D^{*})$ and the Green equilibrium
measures of $K$ relative to $D$ and $D^{*}$ are the
same. Of course, the sets $D$ and $D^{*}$ are not
nearly everywhere equal in general. A similar result
holds also in the case when $K$ is a subset of a hyperplane $H$ and
$D^{*}$ is the reflection of $D$ with respect to $H$.

\bigskip

Finally we state a conjecture and a question:

\medskip

\textbf{Conjecture}: Let $K$ be a subset of a sphere $S(x_{0},r)$
with $C_{2}(K)>0$ and let $D,\Omega$ be two Greenian domains
that contain $K$. If $I(K,D)=I(K,\Omega)$ and the Green equilibrium
measures of $K$ relative to $D$ and $\Omega$ are the
same, then $\Omega{\stackrel{n.e.}{=}}D$ or
$\Omega{\stackrel{n.e.}{=}}D^{*}$ where $D^{*}$ is the
inverse of $D$ with respect to $S(x_{0},r)$.

\medskip

\textbf{Question}: Let $n\geq3$. Let $K$ be a
compact subset of $\IR^{n}$ and suppose that
there does not exist a sphere $S$ or a
hyperplane $H$ such that $C_{2}(K\setminus S)=0$
or $C_{2}(K\setminus H)=0$. Do there exist
domains $D,\Omega$
which are not nearly everywhere equal
and contain $K$ such that $I(K,D)=I(K,\Omega)$ and the Green equilibrium
measures of $K$ relative to $D$ and $\Omega$ are the
same?

\medskip


\end{document}